\newtheorem{theorem}{Theorem}[section]
\theoremstyle{plain}
\newtheorem{axiom}[theorem]{Axiom}
\newtheorem{conjecture}[theorem]{Conjecture}
\newtheorem{corollary}[theorem]{Corollary}
\newtheorem{definition}[theorem]{Definition}
\newtheorem{example}[theorem]{Example}
\newtheorem{exercise}[theorem]{Exercise}
\newtheorem{lemma}[theorem]{Lemma}
\newtheorem{proposition}[theorem]{Proposition}
\newtheorem{remark}[theorem]{Remark}
\numberwithin{equation}{section}
\let\pdfoutput=\undefined\fi
\chardef\@x10\chardef\@xv60
\def\tcitime{
\def\@time{%
  \@minute\time\@hour\@minute\divide\@hour\@xv
  \ifnum\@hour<\@x 0\fi\the\@hour:%
  \multiply\@hour\@xv\advance\@minute-\@hour
  \ifnum\@minute<\@x 0\fi\the\@minute
  }}%
\def\x@hyperref#1#2#3{%
   \catcode`\~ = 12
   \catcode`\$ = 12
   \catcode`\_ = 12
   \catcode`\# = 12
   \catcode`\& = 12
   \y@hyperref{#1}{#2}{#3}%
}
\def\y@hyperref#1#2#3#4{%
   #2\ref{#4}#3
   \catcode`\~ = 13
   \catcode`\$ = 3
   \catcode`\_ = 8
   \catcode`\# = 6
   \catcode`\& = 4
}
\def\QCTOpt[#1]#2{%
  \def\QCTOptB{#1}
  \def\QCTOptA{#2}
}
\def\QCTNOpt#1{%
  \def\QCTOptA{#1}
  \let\QCTOptB\empty
}
\def\Qct{%
  \@ifnextchar[{%
    \QCTOpt}{\QCTNOpt}
}
\def\QCBOpt[#1]#2{%
  \def\QCBOptB{#1}%
  \def\QCBOptA{#2}%
}
\def\QCBNOpt#1{%
  \def\QCBOptA{#1}%
  \let\QCBOptB\empty
}
\def\Qcb{%
  \@ifnextchar[{%
    \QCBOpt}{\QCBNOpt}%
}
\def\PrepCapArgs{%
  \ifx\QCBOptA\empty
    \ifx\QCTOptA\empty
      {}%
    \else
      \ifx\QCTOptB\empty
        {\QCTOptA}%
      \else
        [\QCTOptB]{\QCTOptA}%
      \fi
    \fi
  \else
    \ifx\QCBOptA\empty
      {}%
    \else
      \ifx\QCBOptB\empty
        {\QCBOptA}%
      \else
        [\QCBOptB]{\QCBOptA}%
      \fi
    \fi
  \fi
}
\def\GRAPHICSPS#1{%
 \ifcase\GRAPHICSTYPE
   \special{ps: #1}%
 \or
   \special{language "PS", include "#1"}%
 \fi
}%
\def\graffile#1#2#3#4{%
    \bgroup
	   \@inlabelfalse
       \leavevmode
       \@ifundefined{bbl@deactivate}{\def~{\string~}}{\activesoff}%
        \raise -#4 \BOXTHEFRAME{%
           \hbox to #2{\raise #3\hbox to #2{\null #1\hfil}}}%
    \egroup
}%
\def\draftbox#1#2#3#4{%
 \leavevmode\raise -#4 \hbox{%
  \frame{\rlap{\protect\tiny #1}\hbox to #2%
   {\vrule height#3 width\z@ depth\z@\hfil}%
  }%
 }%
}%
\let\nographics=\@msidraft
\newif\ifwasdraft
\def\GRAPHIC#1#2#3#4#5{%
   \ifnum\@msidraft=\@ne\draftbox{#2}{#3}{#4}{#5}%
   \else\graffile{#1}{#3}{#4}{#5}%
   \fi
}
\def\addtoLaTeXparams#1{%
    \edef\LaTeXparams{\LaTeXparams #1}}%
\newif\ifBoxFrame \BoxFramefalse
\newif\ifOverFrame \OverFramefalse
\newif\ifUnderFrame \UnderFramefalse
\def\BOXTHEFRAME#1{%
   \hbox{%
      \ifBoxFrame
         \frame{#1}%
      \else
         {#1}%
      \fi
   }%
}
\def\doFRAMEparams#1{\BoxFramefalse\OverFramefalse\UnderFramefalse\readFRAMEparams#1\end}%
\def\readFRAMEparams#1{%
 \ifx#1\end%
  \let\next=\relax
  \else
  \ifx#1i\dispkind=\z@\fi
  \ifx#1d\dispkind=\@ne\fi
  \ifx#1f\dispkind=\tw@\fi
  \ifx#1t\addtoLaTeXparams{t}\fi
  \ifx#1b\addtoLaTeXparams{b}\fi
  \ifx#1p\addtoLaTeXparams{p}\fi
  \ifx#1h\addtoLaTeXparams{h}\fi
  \ifx#1X\BoxFrametrue\fi
  \ifx#1O\OverFrametrue\fi
  \ifx#1U\UnderFrametrue\fi
  \ifx#1w
    \ifnum\@msidraft=1\wasdrafttrue\else\wasdraftfalse\fi
    \@msidraft=\@ne
  \fi
  \let\next=\readFRAMEparams
  \fi
 \next
 }%
\def\IFRAME#1#2#3#4#5#6{%
      \bgroup
      \let\QCTOptA\empty
      \let\QCTOptB\empty
      \let\QCBOptA\empty
      \let\QCBOptB\empty
      #6%
      \parindent=0pt
      \leftskip=0pt
      \rightskip=0pt
      \setbox0=\hbox{\QCBOptA}%
      \@tempdima=#1\relax
      \ifOverFrame
          \typeout{This is not implemented yet}%
          \show\HELP
      \else
         \ifdim\wd0>\@tempdima
            \advance\@tempdima by \@tempdima
            \ifdim\wd0 >\@tempdima
               \setbox1 =\vbox{%
                  \unskip\hbox to \@tempdima{\hfill\GRAPHIC{#5}{#4}{#1}{#2}{#3}\hfill}%
                  \unskip\hbox to \@tempdima{\parbox[b]{\@tempdima}{\QCBOptA}}%
               }%
               \wd1=\@tempdima
            \else
               \textwidth=\wd0
               \setbox1 =\vbox{%
                 \noindent\hbox to \wd0{\hfill\GRAPHIC{#5}{#4}{#1}{#2}{#3}\hfill}\\%
                 \noindent\hbox{\QCBOptA}%
               }%
               \wd1=\wd0
            \fi
         \else
            \ifdim\wd0>0pt
              \hsize=\@tempdima
              \setbox1=\vbox{%
                \unskip\GRAPHIC{#5}{#4}{#1}{#2}{0pt}%
                \break
                \unskip\hbox to \@tempdima{\hfill \QCBOptA\hfill}%
              }%
              \wd1=\@tempdima
           \else
              \hsize=\@tempdima
              \setbox1=\vbox{%
                \unskip\GRAPHIC{#5}{#4}{#1}{#2}{0pt}%
              }%
              \wd1=\@tempdima
           \fi
         \fi
         \@tempdimb=\ht1
         \advance\@tempdimb by -#2
         \advance\@tempdimb by #3
         \leavevmode
         \raise -\@tempdimb \hbox{\box1}%
      \fi
      \egroup%
}%
\def\DFRAME#1#2#3#4#5{%
  \vspace\topsep
  \hfil\break
  \bgroup
     \leftskip\@flushglue
	 \rightskip\@flushglue
	 \parindent\z@
	 \parfillskip\z@skip
     \let\QCTOptA\empty
     \let\QCTOptB\empty
     \let\QCBOptA\empty
     \let\QCBOptB\empty
	 \vbox\bgroup
        \ifOverFrame 
           #5\QCTOptA\par
        \fi
        \GRAPHIC{#4}{#3}{#1}{#2}{\z@}%
        \ifUnderFrame 
           \break#5\QCBOptA
        \fi
	 \egroup
  \egroup
  \vspace\topsep
  \break
}%
\def\FFRAME#1#2#3#4#5#6#7{%
  \@ifundefined{floatstyle}
    {
     \begin{figure}[#1]%
    }
    {
	 \ifx#1h
      \begin{figure}[H]%
	 \else
      \begin{figure}[#1]%
	 \fi
	}
  \let\QCTOptA\empty
  \let\QCTOptB\empty
  \let\QCBOptA\empty
  \let\QCBOptB\empty
  \ifOverFrame
    #4
    \ifx\QCTOptA\empty
    \else
      \ifx\QCTOptB\empty
        \caption{\QCTOptA}%
      \else
        \caption[\QCTOptB]{\QCTOptA}%
      \fi
    \fi
    \ifUnderFrame\else
      \label{#5}%
    \fi
  \else
    \UnderFrametrue%
  \fi
  \begin{center}\GRAPHIC{#7}{#6}{#2}{#3}{\z@}\end{center}%
  \ifUnderFrame
    #4
    \ifx\QCBOptA\empty
      \caption{}%
    \else
      \ifx\QCBOptB\empty
        \caption{\QCBOptA}%
      \else
        \caption[\QCBOptB]{\QCBOptA}%
      \fi
    \fi
    \label{#5}%
  \fi
  \end{figure}%
 }%
\def\makeactives{
  \catcode`\"=\active
  \catcode`\;=\active
  \catcode`\:=\active
  \catcode`\'=\active
  \catcode`\~=\active
}
   \gdef\activesoff{%
      \def"{\string"}%
      \def;{\string;}%
      \def:{\string:}%
      \def'{\string'}%
      \def~{\string~}%
    }
\def\FRAME#1#2#3#4#5#6#7#8{%
 \bgroup
 \ifnum\@msidraft=\@ne
   \wasdrafttrue
 \else
   \wasdraftfalse%
 \fi
 \def\LaTeXparams{}%
 \dispkind=\z@
 \def\LaTeXparams{}%
 \doFRAMEparams{#1}%
 \ifnum\dispkind=\z@\IFRAME{#2}{#3}{#4}{#7}{#8}{#5}\else
  \ifnum\dispkind=\@ne\DFRAME{#2}{#3}{#7}{#8}{#5}\else
   \ifnum\dispkind=\tw@
    \edef\@tempa{\noexpand\FFRAME{\LaTeXparams}}%
    \@tempa{#2}{#3}{#5}{#6}{#7}{#8}%
    \fi
   \fi
  \fi
  \ifwasdraft\@msidraft=1\else\@msidraft=0\fi{}%
  \egroup
 }%
\def\TEXUX#1{"texux"}
\long\def\QQQ#1#2{%
     \long\expandafter\def\csname#1\endcsname{#2}}%
\long\def\QQA#1#2{}%
\def\QTR#1#2{{\csname#1\endcsname {#2}}}%
\def\EXPAND#1[#2]#3{}%
\def\NOEXPAND#1[#2]#3{}%
\def\LaTeXparent#1{}%
\def\ChildStyles#1{}%
\def\ChildDefaults#1{}%
\def\QTagDef#1#2#3{}%
  \providecommand{\UNICODE}[2][]{\protect\rule{.1in}{.1in}}
  \providecommand{\U}[1]{\protect\rule{.1in}{.1in}}
\def\QQfnmark#1{\footnotemark}
 \def\abstract{%
  \if@twocolumn
   \section*{Abstract (Not appropriate in this style!)}%
   \else \small 
   \begin{center}{\bf Abstract\vspace{-.5em}\vspace{\z@}}\end{center}%
   \quotation 
   \fi
  }%
   \def\registered{\relax\ifmmode{}\r@gistered
                    \else$\m@th\r@gistered$\fi}%
 \def\r@gistered{^{\ooalign
  {\hfil\raise.07ex\hbox{$\scriptstyle\rm\text{R}$}\hfil\crcr
  \mathhexbox20D}}}}{}%
\newdimen\theight
\def\newfmtname{LaTeX2e}
  \DeclareOldFontCommand{\rm}{\normalfont\rmfamily}{\mathrm}
  \DeclareOldFontCommand{\sf}{\normalfont\sffamily}{\mathsf}
  \DeclareOldFontCommand{\tt}{\normalfont\ttfamily}{\mathtt}
  \DeclareOldFontCommand{\bf}{\normalfont\bfseries}{\mathbf}
  \DeclareOldFontCommand{\it}{\normalfont\itshape}{\mathit}
  \DeclareOldFontCommand{\sl}{\normalfont\slshape}{\@nomath\sl}
  \DeclareOldFontCommand{\sc}{\normalfont\scshape}{\@nomath\sc}
\def\alpha{{\Greekmath 010B}}%
\def\beta{{\Greekmath 010C}}%
\def\gamma{{\Greekmath 010D}}%
\def\delta{{\Greekmath 010E}}%
\def\epsilon{{\Greekmath 010F}}%
\def\zeta{{\Greekmath 0110}}%
\def\eta{{\Greekmath 0111}}%
\def\theta{{\Greekmath 0112}}%
\def\iota{{\Greekmath 0113}}%
\def\kappa{{\Greekmath 0114}}%
\def\lambda{{\Greekmath 0115}}%
\def\mu{{\Greekmath 0116}}%
\def\nu{{\Greekmath 0117}}%
\def\xi{{\Greekmath 0118}}%
\def\pi{{\Greekmath 0119}}%
\def\rho{{\Greekmath 011A}}%
\def\sigma{{\Greekmath 011B}}%
\def\tau{{\Greekmath 011C}}%
\def\upsilon{{\Greekmath 011D}}%
\def\phi{{\Greekmath 011E}}%
\def\chi{{\Greekmath 011F}}%
\def\psi{{\Greekmath 0120}}%
\def\omega{{\Greekmath 0121}}%
\def\varepsilon{{\Greekmath 0122}}%
\def\vartheta{{\Greekmath 0123}}%
\def\varpi{{\Greekmath 0124}}%
\def\varrho{{\Greekmath 0125}}%
\def\varsigma{{\Greekmath 0126}}%
\def\varphi{{\Greekmath 0127}}%
\def\nabla{{\Greekmath 0272}}
\def\FindBoldGroup{%
   {\setbox0=\hbox{$\mathbf{x\global\edef\theboldgroup{\the\mathgroup}}$}}%
}
\def\Greekmath#1#2#3#4{%
    \if@compatibility
        \ifnum\mathgroup=\symbold
           \mathchoice{\mbox{\boldmath$\displaystyle\mathchar"#1#2#3#4$}}%
                      {\mbox{\boldmath$\textstyle\mathchar"#1#2#3#4$}}%
                      {\mbox{\boldmath$\scriptstyle\mathchar"#1#2#3#4$}}%
                      {\mbox{\boldmath$\scriptscriptstyle\mathchar"#1#2#3#4$}}%
        \else
           \mathchar"#1#2#3#4%
        \fi 
    \else 
        \FindBoldGroup
        \ifnum\mathgroup=\theboldgroup 
           \mathchoice{\mbox{\boldmath$\displaystyle\mathchar"#1#2#3#4$}}%
                      {\mbox{\boldmath$\textstyle\mathchar"#1#2#3#4$}}%
                      {\mbox{\boldmath$\scriptstyle\mathchar"#1#2#3#4$}}%
                      {\mbox{\boldmath$\scriptscriptstyle\mathchar"#1#2#3#4$}}%
        \else
           \mathchar"#1#2#3#4%
        \fi     	    
	  \fi}
\newif\ifGreekBold  \GreekBoldfalse
\let\SAVEPBF=\pbf
\def\pbf{\GreekBoldtrue\SAVEPBF}%
  \newcounter{equationnumber}  
  \def\mathletters{%
     \addtocounter{equation}{1}
     \edef\@currentlabel{\theequation}%
     \setcounter{equationnumber}{\c@equation}
     \setcounter{equation}{0}%
     \edef\theequation{\@currentlabel\noexpand\alph{equation}}%
  }
    \def\BibTeX{{\rm B\kern-.05em{\sc i\kern-.025em b}\kern-.08em
                 T\kern-.1667em\lower.7ex\hbox{E}\kern-.125emX}}}{}%
\def\AmS{{\protect\usefont{OMS}{cmsy}{m}{n}%
                A\kern-.1667em\lower.5ex\hbox{M}\kern-.125emS}}}{}%
\def\@@eqncr{\let\@tempa\relax
    \ifcase\@eqcnt \def\@tempa{& & &}\or \def\@tempa{& &}%
      \else \def\@tempa{&}\fi
     \@tempa
     \if@eqnsw
        \iftag@
           \@taggnum
        \else
           \@eqnnum\stepcounter{equation}%
        \fi
     \fi
     \global\tag@false
     \global\@eqnswtrue
     \global\@eqcnt\z@\cr}
\def\TCItag{\@ifnextchar*{\@TCItagstar}{\@TCItag}}
\def\@TCItag#1{%
    \global\tag@true
    \global\def\@taggnum{(#1)}%
    \global\def\@currentlabel{#1}}
\def\@TCItagstar*#1{%
    \global\tag@true
    \global\def\@taggnum{#1}%
    \global\def\@currentlabel{#1}}
\def\dcoprod{\mathop{\displaystyle \coprod }}%
\def\ExitTCILatex{\makeatother }
\if@compatibility\message{amsmath already loaded}\fi\aftergroup\ExitTCILatex}
\if@compatibility\message{amstex already loaded}\fi\aftergroup\ExitTCILatex}
\if@compatibility\message{amsgen already loaded}\fi\aftergroup\ExitTCILatex}
\let\DOTSI\relax
\def\RIfM@{\relax\ifmmode}%
\def\FN@{\futurelet\next}%
\def\iint{\DOTSI\intno@\tw@\FN@\ints@}%
\def\iiint{\DOTSI\intno@\thr@@\FN@\ints@}%
\def\iiiint{\DOTSI\intno@4 \FN@\ints@}%
\def\idotsint{\DOTSI\intno@\z@\FN@\ints@}%
\def\ints@{\findlimits@\ints@@}%
\newif\iflimtoken@
\newif\iflimits@
\def\findlimits@{\limtoken@true\ifx\next\limits\limits@true
 \else\ifx\next\nolimits\limits@false\else
 \limtoken@false\ifx\ilimits@\nolimits\limits@false\else
 \ifinner\limits@false\else\limits@true\fi\fi\fi\fi}%
\def\multint@{\int\ifnum\intno@=\z@\intdots@                          
 \else\intkern@\fi                                                    
 \ifnum\intno@>\tw@\int\intkern@\fi                                   
 \ifnum\intno@>\thr@@\int\intkern@\fi                                 
 \int}
\def\multintlimits@{\intop\ifnum\intno@=\z@\intdots@\else\intkern@\fi
 \ifnum\intno@>\tw@\intop\intkern@\fi
 \ifnum\intno@>\thr@@\intop\intkern@\fi\intop}%
\def\intic@{%
    \mathchoice{\hskip.5em}{\hskip.4em}{\hskip.4em}{\hskip.4em}}%
\def\negintic@{\mathchoice
 {\hskip-.5em}{\hskip-.4em}{\hskip-.4em}{\hskip-.4em}}%
\def\ints@@{\iflimtoken@                                              
 \def\ints@@@{\iflimits@\negintic@
   \mathop{\intic@\multintlimits@}\limits                             
  \else\multint@\nolimits\fi                                          
  \eat@}
 \else                                                                
 \def\ints@@@{\iflimits@\negintic@
  \mathop{\intic@\multintlimits@}\limits\else
  \multint@\nolimits\fi}\fi\ints@@@}%
\def\intkern@{\mathchoice{\!\!\!}{\!\!}{\!\!}{\!\!}}%
\def\plaincdots@{\mathinner{\cdotp\cdotp\cdotp}}%
\def\intdots@{\mathchoice{\plaincdots@}%
 {{\cdotp}\mkern1.5mu{\cdotp}\mkern1.5mu{\cdotp}}%
 {{\cdotp}\mkern1mu{\cdotp}\mkern1mu{\cdotp}}%
 {{\cdotp}\mkern1mu{\cdotp}\mkern1mu{\cdotp}}}%
\def\RIfM@{\relax\protect\ifmmode}
\def\text{\RIfM@\expandafter\text@\else\expandafter\mbox\fi}
\let\nfss@text\text
\def\text@#1{\mathchoice
   {\textdef@\displaystyle\f@size{#1}}%
   {\textdef@\textstyle\tf@size{\firstchoice@false #1}}%
   {\textdef@\textstyle\sf@size{\firstchoice@false #1}}%
   {\textdef@\textstyle \ssf@size{\firstchoice@false #1}}%
   \glb@settings}
\def\textdef@#1#2#3{\hbox{{%
                    \everymath{#1}%
                    \let\f@size#2\selectfont
                    #3}}}
\newif\iffirstchoice@
\def\Let@{\relax\iffalse{\fi\let\\=\cr\iffalse}\fi}%
\def\vspace@{\def\vspace##1{\crcr\noalign{\vskip##1\relax}}}%
\def\multilimits@{\bgroup\vspace@\Let@
 \baselineskip\fontdimen10 \scriptfont\tw@
 \advance\baselineskip\fontdimen12 \scriptfont\tw@
 \lineskip\thr@@\fontdimen8 \scriptfont\thr@@
 \lineskiplimit\lineskip
 \vbox\bgroup\ialign\bgroup\hfil$\m@th\scriptstyle{##}$\hfil\crcr}%
\def\Sb{_\multilimits@}%
\def\endSb{\crcr\egroup\egroup\egroup}%
\def\Sp{^\multilimits@}%
\newdimen\ex@
\def\rightarrowfill@#1{$#1\m@th\mathord-\mkern-6mu\cleaders
 \hbox{$#1\mkern-2mu\mathord-\mkern-2mu$}\hfill
 \mkern-6mu\mathord\rightarrow$}%
\def\leftarrowfill@#1{$#1\m@th\mathord\leftarrow\mkern-6mu\cleaders
 \hbox{$#1\mkern-2mu\mathord-\mkern-2mu$}\hfill\mkern-6mu\mathord-$}%
\def\leftrightarrowfill@#1{$#1\m@th\mathord\leftarrow
\mkern-6mu\cleaders
 \hbox{$#1\mkern-2mu\mathord-\mkern-2mu$}\hfill
 \mkern-6mu\mathord\rightarrow$}%
\def\overrightarrow{\mathpalette\overrightarrow@}%
\def\overrightarrow@#1#2{\vbox{\ialign{##\crcr\rightarrowfill@#1\crcr
 \noalign{\kern-\ex@\nointerlineskip}$\m@th\hfil#1#2\hfil$\crcr}}}%
\def\overleftarrow{\mathpalette\overleftarrow@}%
\def\overleftarrow@#1#2{\vbox{\ialign{##\crcr\leftarrowfill@#1\crcr
 \noalign{\kern-\ex@\nointerlineskip}$\m@th\hfil#1#2\hfil$\crcr}}}%
\def\overleftrightarrow{\mathpalette\overleftrightarrow@}%
\def\overleftrightarrow@#1#2{\vbox{\ialign{##\crcr
   \leftrightarrowfill@#1\crcr
 \noalign{\kern-\ex@\nointerlineskip}$\m@th\hfil#1#2\hfil$\crcr}}}%
\def\underrightarrow{\mathpalette\underrightarrow@}%
\def\underrightarrow@#1#2{\vtop{\ialign{##\crcr$\m@th\hfil#1#2\hfil
  $\crcr\noalign{\nointerlineskip}\rightarrowfill@#1\crcr}}}%
\def\underleftarrow{\mathpalette\underleftarrow@}%
\def\underleftarrow@#1#2{\vtop{\ialign{##\crcr$\m@th\hfil#1#2\hfil
  $\crcr\noalign{\nointerlineskip}\leftarrowfill@#1\crcr}}}%
\def\underleftrightarrow{\mathpalette\underleftrightarrow@}%
\def\underleftrightarrow@#1#2{\vtop{\ialign{##\crcr$\m@th
  \hfil#1#2\hfil$\crcr
 \noalign{\nointerlineskip}\leftrightarrowfill@#1\crcr}}}%
\def\qopnamewl@#1{\mathop{\operator@font#1}\nlimits@}
\let\nlimits@\displaylimits
\def\setboxz@h{\setbox\z@\hbox}
\def\varlim@#1#2{\mathop{\vtop{\ialign{##\crcr
 \hfil$#1\m@th\operator@font lim$\hfil\crcr
 \noalign{\nointerlineskip}#2#1\crcr
 \noalign{\nointerlineskip\kern-\ex@}\crcr}}}}
 \def\rightarrowfill@#1{\m@th\setboxz@h{$#1-$}\ht\z@\z@
  $#1\copy\z@\mkern-6mu\cleaders
  \hbox{$#1\mkern-2mu\box\z@\mkern-2mu$}\hfill
  \mkern-6mu\mathord\rightarrow$}
\def\leftarrowfill@#1{\m@th\setboxz@h{$#1-$}\ht\z@\z@
  $#1\mathord\leftarrow\mkern-6mu\cleaders
  \hbox{$#1\mkern-2mu\copy\z@\mkern-2mu$}\hfill
  \mkern-6mu\box\z@$}
\def\projlim{\qopnamewl@{proj\,lim}}
\def\injlim{\qopnamewl@{inj\,lim}}
\def\varinjlim{\mathpalette\varlim@\rightarrowfill@}
\def\varprojlim{\mathpalette\varlim@\leftarrowfill@}
\def\varliminf{\mathpalette\varliminf@{}}
\def\varliminf@#1{\mathop{\underline{\vrule\@depth.2\ex@\@width\z@
   \hbox{$#1\m@th\operator@font lim$}}}}
\def\varlimsup{\mathpalette\varlimsup@{}}
\def\varlimsup@#1{\mathop{\overline
  {\hbox{$#1\m@th\operator@font lim$}}}}
\def\align{\@verbatim \frenchspacing\@vobeyspaces \@alignverbatim
You are using the "align" environment in a style in which it is not defined.}
\let\csname endalign*\endcsname =\endtrivlist
\def\alignat{\@verbatim \frenchspacing\@vobeyspaces \@alignatverbatim
You are using the "alignat" environment in a style in which it is not defined.}
\let\csname endalignat*\endcsname =\endtrivlist
\def\xalignat{\@verbatim \frenchspacing\@vobeyspaces \@xalignatverbatim
You are using the "xalignat" environment in a style in which it is not defined.}
\let\csname endxalignat*\endcsname =\endtrivlist
\def\gather{\@verbatim \frenchspacing\@vobeyspaces \@gatherverbatim
You are using the "gather" environment in a style in which it is not defined.}
\let\csname endgather*\endcsname =\endtrivlist
\def\multiline{\@verbatim \frenchspacing\@vobeyspaces \@multilineverbatim
You are using the "multiline" environment in a style in which it is not defined.}
\let\csname endmultiline*\endcsname =\endtrivlist
\def\arrax{\@verbatim \frenchspacing\@vobeyspaces \@arraxverbatim
You are using a type of "array" construct that is only allowed in AmS-LaTeX.}
\def\tabulax{\@verbatim \frenchspacing\@vobeyspaces \@tabulaxverbatim
You are using a type of "tabular" construct that is only allowed in AmS-LaTeX.}
\let\csname endarrax*\endcsname =\endtrivlist
\let\csname endtabulax*\endcsname =\endtrivlist
 \def\endequation{%
     \ifmmode\ifinner 
      \iftag@
        \addtocounter{equation}{-1} 
        $\hfil
           \displaywidth\linewidth\@taggnum\egroup \endtrivlist
        \global\tag@false
        \global\@ignoretrue   
      \else
        $\hfil
           \displaywidth\linewidth\@eqnnum\egroup \endtrivlist
        \global\tag@false
        \global\@ignoretrue 
      \fi
     \else   
      \iftag@
        \addtocounter{equation}{-1} 
        \eqno \hbox{\@taggnum}
        \global\tag@false%
        $$\global\@ignoretrue
      \else
        \eqno \hbox{\@eqnnum}
        $$\global\@ignoretrue
      \fi
     \fi\fi
 } 
 \newif\iftag@ \tag@false
 \def\TCItag{\@ifnextchar*{\@TCItagstar}{\@TCItag}}
 \def\@TCItag#1{%
     \global\tag@true
     \global\def\@taggnum{(#1)}%
     \global\def\@currentlabel{#1}}
 \def\@TCItagstar*#1{%
     \global\tag@true
     \global\def\@taggnum{#1}%
     \global\def\@currentlabel{#1}}
     \def\tag{\@ifnextchar*{\@tagstar}{\@tag}}
     \def\@tag#1{%
         \global\tag@true
         \global\def\@taggnum{(#1)}}
     \def\@tagstar*#1{%
         \global\tag@true
         \global\def\@taggnum{#1}}
\begin{document}
\title[Quasi-shape theory]{Quasi-shape theory of locally finite and
paracompact spaces}
\author{Andrei V. Prasolov}
\address{Institute of Maths \& Stats, Univ. of Troms\o , Norway}
\email{andrei.prasolov@uit.no}
\urladdr{http://www.math.uit.no/users/andreip/Welcome.html}
\date{December 28, 2010}
\subjclass[2000]{Primary 55P55}
\keywords{Shape, quasi-shape, locally finite spaces, weak homotopy type,
hypercoverings, pro-sets, pro-spaces.}

\begin{abstract}
Shape theory works nice for (Hausdorff) paracompact spaces, but for spaces
with no separation axioms, it seems to be quite poor. However, for finite
and locally finite spaces their weak homotopy type is rather rich, and is
equivalent to the weak homotopy type of finite and locally finite polynedra,
respectively. In the paper there is proposed a variant of shape theory
called quasi-shape, which suits both paracompact and locally finite spaces,
i.e. the quas-shape is isomorphic to the weak homotopy type for locally
finite spaces, and is $\natural $-equivalent to the ordinary shape in the
case of paracompact spaces.
\end{abstract}

\maketitle

\section{Main construction}

\subsection{The connected component functor $\protect\pi $}

We need an appropriate definition of%
\begin{equation*}
\pi :TOP\longrightarrow SETS
\end{equation*}%
where $TOP$\ and $SETS$ are the categories of topological spaces and sets,
respectively. Neither the usual functor $\pi _{0}$ (the set of pathwise
connected components) nor $\pi _{0}^{\prime }$ (the set of connected
components) is suitable for our purposes. We will introduce instead the
following functor%
\begin{equation*}
\pi :TOP\longrightarrow pro\text{-}SETS:
\end{equation*}%
\begin{equation*}
\pi \left( X\right) _{\mathcal{U}}:=\mathcal{U}
\end{equation*}%
for any \textbf{open} partition of $X$ (i.e., a partition into open
subsets). We say that $\mathcal{U}\leq \mathcal{V}$ if $\mathcal{V}$ refines 
$\mathcal{U}$. The set $Part\left( X\right) $ of all open partitions of $X$
is clearly directed, and we obtain an inverse system of sets by defining%
\begin{equation*}
p_{\mathcal{U}\leq \mathcal{V}}:\pi \left( X\right) _{\mathcal{V}%
}\longrightarrow \pi \left( X\right) _{\mathcal{U}}
\end{equation*}%
where%
\begin{eqnarray*}
p_{\mathcal{U}\leq \mathcal{V}}\left( Y\right) &=&Y^{\prime }, \\
Y &\in &\mathcal{V}, \\
Y^{\prime } &\in &\mathcal{U},
\end{eqnarray*}%
and $Y^{\prime }$ is the unique element of $\mathcal{U}$, containing $Y$.

Let now 
\begin{equation*}
f:X\longrightarrow Y
\end{equation*}%
be a continuous mapping. Define%
\begin{equation*}
\pi \left( f\right) :\pi \left( X\right) \longrightarrow \pi \left( Y\right)
\end{equation*}%
by the following. Let $\mathcal{U}$ be an open partition of $Y$, and let 
\begin{equation*}
\mathcal{V}=\xi \left( \mathcal{U}\right) =\xi _{f}\left( \mathcal{U}\right)
:=\left\{ Y^{\prime }=f^{-1}\left( Y\right) :Y\in \mathcal{U},Y^{\prime
}\neq \varnothing \right\} .
\end{equation*}%
$\mathcal{V}$ is clearly an open partition of $X$, and we have just defined
a mapping%
\begin{equation*}
\xi =\xi _{f}:Part\left( Y\right) \longrightarrow Part\left( X\right) .
\end{equation*}%
There can be defined also a mapping%
\begin{equation*}
f_{\mathcal{U}}:\pi \left( X\right) _{\mathcal{V}}\longrightarrow \pi \left(
Y\right) _{\mathcal{U}}
\end{equation*}%
by%
\begin{equation*}
f_{\mathcal{U}}\left( Y^{\prime }\right) :=Y
\end{equation*}%
where%
\begin{equation*}
\varnothing \neq Y^{\prime }=f^{-1}\left( Y\right) .
\end{equation*}%
It can be easily checked that the pair%
\begin{equation*}
\left( \xi _{f}:Part\left( Y\right) \longrightarrow Part\left( X\right)
,\left( f_{\mathcal{U}}:\pi \left( X\right) _{\xi _{f}\left( \mathcal{V}%
\right) }\longrightarrow \pi \left( Y\right) _{\mathcal{U}}:\mathcal{U}\in
Part\left( Y\right) \right) \right)
\end{equation*}%
gives a well-defined morphism%
\begin{equation*}
\pi \left( f\right) :\pi \left( X\right) \longrightarrow \pi \left( Y\right)
\end{equation*}%
in the category $pro$-$SETS$, and the correspondence $f\longmapsto \pi
\left( f\right) $ defines a functor%
\begin{equation*}
\pi :TOP\longrightarrow pro\text{-}SETS.
\end{equation*}

\begin{proposition}
Let $X$ be a locally connected space. Then $\pi \left( X\right) $ is
isomorphic in the category $pro$-$SETS$ to the set $\pi _{0}^{\prime }\left(
X\right) $ of connected components of $X$.
\end{proposition}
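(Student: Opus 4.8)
The plan is to identify the partition of $X$ into its connected components as the largest element of the directed set $Part(X)$, and then to invoke the standard fact that an inverse system indexed by a directed set with a top element is pro-isomorphic to the constant (rudimentary) system given by its value there.

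First I would record the only topological input. Since $X$ is locally connected, every connected component $C$ of $X$ is open; being a connected component it is automatically closed, hence clopen. Consequently the family $\mathcal{C}:=\pi_{0}^{\prime}(X)$ of connected components is itself an open partition of $X$, so $\mathcal{C}\in Part(X)$ and, as sets, $\pi(X)_{\mathcal{C}}=\mathcal{C}=\pi_{0}^{\prime}(X)$.

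Next I would show that $\mathcal{C}$ is the maximum of $Part(X)$, i.e. $\mathcal{U}\leq\mathcal{C}$ for every open partition $\mathcal{U}$. Each element $U\in\mathcal{U}$ is clopen, because its complement is the union of the remaining (open) parts. A connected component $C$ therefore satisfies either $C\subseteq U$ or $C\cap U=\varnothing$: otherwise $C=(C\cap U)\sqcup(C\setminus U)$ would be a decomposition of $C$ into two nonempty sets that are open in $C$, contradicting connectedness. Hence each $C\in\mathcal{C}$ lies in a unique element of $\mathcal{U}$, which is exactly the assertion that $\mathcal{C}$ refines $\mathcal{U}$, that is, $\mathcal{U}\leq\mathcal{C}$.

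Finally, since $\mathcal{C}$ is the top element, the singleton $\{\mathcal{C}\}$ is cofinal in $Part(X)$, and restriction of an inverse system to a cofinal subindex yields a pro-isomorphic object. Restricting $\pi(X)$ to $\{\mathcal{C}\}$ gives precisely the constant pro-set with value $\pi(X)_{\mathcal{C}}=\pi_{0}^{\prime}(X)$, whence $\pi(X)\cong\pi_{0}^{\prime}(X)$ in $pro\text{-}SETS$. The mutually inverse pro-morphisms can be written explicitly: in one direction the identity of $\pi_{0}^{\prime}(X)$ placed at the index $\mathcal{C}$, and in the other the compatible family of bonding maps $p_{\mathcal{U}\leq\mathcal{C}}:\pi_{0}^{\prime}(X)\longrightarrow\pi(X)_{\mathcal{U}}$. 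I expect the only real subtlety to lie in this last step: making the passage from a terminal index to an honest isomorphism in $pro\text{-}SETS$ precise. Because morphisms out of a directed system are computed as $\lim_{\mathcal{V}}\mathrm{colim}_{\mathcal{U}}\,\mathrm{Hom}(\pi(X)_{\mathcal{U}},-)$, the colimit over $Part(X)$ collapses to the value at $\mathcal{C}$, so the verification that the two displayed pro-morphisms compose to the identities is routine; everything else is elementary point-set topology.
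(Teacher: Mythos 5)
Your proposal is correct and follows essentially the same route as the paper: both identify $\pi_{0}^{\prime}(X)$ as an open partition refining every other open partition (hence the top element of $Part(X)$) and then collapse the inverse system to the constant pro-set at that cofinal index. Your version merely makes explicit the point-set details (clopenness, connectedness argument) and the pro-category bookkeeping that the paper leaves implicit.
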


\begin{proof}
The set $\pi _{0}^{\prime }\left( X\right) $ is an open partition of $X$
which refines any other open partition. Therefore, $Part\left( X\right) $
has a maximal element $\pi _{0}^{\prime }\left( X\right) $, and $\pi \left(
X\right) $ is isomorphic to the trivial pro-set $\pi _{0}^{\prime }\left(
X\right) $ indexed by a one-point index set, i.e. to the \textbf{set} $\pi
_{0}^{\prime }\left( X\right) $.
\end{proof}

\subsection{Quasi-shape}

Let $Cov\left( X\right) $ be the set of open coverings on $X$, pre-ordered
by the refinement relation. Analogously to $Part\left( X\right) $, $%
Cov\left( X\right) $ is a directed \textbf{pre-ordered} set, while $%
Part\left( X\right) $ is a directed \textbf{ordered} set. Let 
\begin{equation*}
U_{\cdot }=\left( U_{\ast },d_{\ast },s_{\ast }\right)
\end{equation*}%
be a hypercovering on $X$ (see \cite{Atrin-Mazur-MR883959}, Definition 8.4),
i.e. a simplicial space with an augmentation%
\begin{equation*}
\varepsilon :U_{\cdot }\longrightarrow X,
\end{equation*}%
and the following properties:

\begin{description}
\item[Hyper$_{0}$] 
\begin{equation*}
\varepsilon _{0}:U_{0}\longrightarrow X
\end{equation*}%
is an open covering;

\item[Hyper$_{n}$] 
\begin{equation*}
U_{n+1}\longrightarrow \left( Cosk_{n}U_{\cdot }\right) _{n+1}
\end{equation*}%
are open coverings, $n\geq 0$.
\end{description}

If $\mathcal{U}$ is an open covering, one can define the corresponding \v{C}%
ech hypercovering by%
\begin{equation*}
U_{n}=\dcoprod\limits_{U_{i}\in \mathcal{U}}\left( U_{0}\cap U_{1}\cap
...\cap U_{n}\right)
\end{equation*}%
with the evident face ($d_{\ast }$) and degeneracy ($s_{\ast }$) mappings,
where $\amalg $ is the coproduct in the category of topological spaces. For
the \v{C}ech hypercovering, the mappings%
\begin{equation*}
U_{n+1}\longrightarrow \left( Cosk_{n}U_{\cdot }\right) _{n+1},n\geq 0,
\end{equation*}%
are homeomorphisms.

\begin{remark}
The \v{C}ech hypercoverings are used in the definition of ordinary shape of
a topological space, see \cite{Mardesic-MR1740831}.
\end{remark}

\begin{definition}
\label{Shape}Let $X$ be a topological space. The \textbf{shape} of $X$ is
the following pro-space. Given a \textbf{normal} (i.e. admitting a partition
of unity) covering $\mathcal{U}$, let $N\mathcal{U}$ (the \v{C}ech nerve of $%
\mathcal{U}$) be a simplicial set with%
\begin{equation*}
\left( N\mathcal{U}\right) _{n}=\left\{ \left( U_{0},U_{1},...,U_{n}\right)
:\left( U_{i}\in \mathcal{U}\right) \mathbf{\&}\left( U_{0}\cap U_{1}\cap
...\cap U_{n}\neq \varnothing \right) \right\}
\end{equation*}%
with the evident face ($d_{\ast }$) and degeneracy ($s_{\ast }$) mappings.
If $\mathcal{V}$ refines $\mathcal{U}$, there exists a unique (up to
homotopy) mapping 
\begin{equation*}
p_{\mathcal{U}\leq \mathcal{V}}:N\mathcal{V}\longrightarrow N\mathcal{U}.
\end{equation*}%
The correspondence%
\begin{equation*}
\mathcal{U}\longmapsto \left\vert N\mathcal{U}\right\vert
\end{equation*}%
where $\mathcal{U}$ runs over all normal coverings on $X$, and $\left\vert N%
\mathcal{U}\right\vert $ is the geometric realization of $N\mathcal{U}$,
defines an object $SH\left( X\right) $ in $pro$-$H\left( TOP\right) $ which
is called the shape of $X$.
\end{definition}

Let $HCov\left( X\right) $ be the following category: the objects are
hypercoverings on $X$, and the morphisms from $U_{\cdot }$ to $V_{\cdot }$
are homotopy classes of simplicial mappings%
\begin{equation*}
U_{\cdot }\longrightarrow V_{\cdot }
\end{equation*}%
This category is co-filtering. Given a hypercovering $U_{\cdot }$, let%
\begin{equation*}
\Gamma \left( U_{\cdot },\pi \right) =\left\vert \pi \left( U_{\cdot
}\right) \right\vert
\end{equation*}%
where $\left\vert \pi \left( U_{\cdot }\right) \right\vert $ is the
geometric realization of the simplicial pro-set $\pi \left( U_{\cdot
}\right) $. Varying $U_{\cdot }$, one gets an object%
\begin{equation*}
U_{\cdot }\longmapsto \left\vert \pi \left( U_{\cdot }\right) \right\vert
\end{equation*}%
in $pro$-$H\left( pro\text{-}TOP\right) $. Finally, applying the canonical
functor%
\begin{equation*}
pro\text{-}H\left( pro\text{-}TOP\right) \longrightarrow pro\text{-}\left(
pro\text{-}H\left( TOP\right) \right) \longrightarrow pro\text{-}H\left(
TOP\right) ,
\end{equation*}%
one gets an object $QSH\left( X\right) $ in $pro$-$H\left( TOP\right) $
which will be called the \textbf{quasi-shape} of $X$.

\begin{theorem}
\label{QSH}The correspondence above gives a well-defined functor%
\begin{equation*}
QSH:TOP\longrightarrow pro\text{-}H\left( TOP\right) ,
\end{equation*}%
which factors through the homotopy category $H\left( TOP\right) $:%
\begin{equation*}
QSH:TOP\longrightarrow H\left( TOP\right) \longrightarrow pro\text{-}H\left(
TOP\right) .
\end{equation*}
\end{theorem}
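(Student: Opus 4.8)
The plan is to check the three assertions bundled in the statement: that each $QSH(X)$ is a bona fide object of $pro\text{-}H(TOP)$, that a continuous map induces a morphism compatibly with composition, and that this morphism depends only on the homotopy class. For the object, I would apply $\pi$ levelwise to a hypercovering $U_{\cdot }$ to obtain a simplicial pro-set $\pi(U_{\cdot })$ and then realize it; since $HCov(X)$ is co-filtering, the rule $U_{\cdot }\mapsto |\pi(U_{\cdot })|$ is a pro-object provided the morphisms of $HCov(X)$ — which are homotopy classes of simplicial maps — act coherently. The one nonformal ingredient is therefore the lemma that simplicially homotopic maps $U_{\cdot }\to V_{\cdot }$ induce the same morphism $|\pi(U_{\cdot })|\to |\pi(V_{\cdot })|$ in $H(TOP)$; granting it, the passage through the canonical functors $pro\text{-}H(pro\text{-}TOP)\to pro\text{-}(pro\text{-}H(TOP))\to pro\text{-}H(TOP)$ is purely formal.

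To define $QSH(f)$ for $f:X\to Y$ I would pull hypercoverings back. For a hypercovering $V_{\cdot }$ on $Y$ put $f^{*}V_{\cdot }=V_{\cdot }\times _{Y}X$; because the base-change functor $(-)\times _{Y}X$ preserves limits (hence commutes with $Cosk_{n}$) and carries open coverings to open coverings, $f^{*}V_{\cdot }$ again satisfies $\mathrm{Hyper}_{0}$ and $\mathrm{Hyper}_{n}$ and so is a hypercovering on $X$, giving a functor $f^{*}:HCov(Y)\to HCov(X)$. The projection $\tilde{f}:f^{*}V_{\cdot }\to V_{\cdot }$ is a simplicial map over $f$, so $|\pi(\tilde{f})|$ yields $|\pi(f^{*}V_{\cdot })|\to |\pi(V_{\cdot })|$; naturality of $\tilde{f}$ in $V_{\cdot }$ makes these maps compatible with refinements, so they assemble into a pro-morphism $QSH(f)$. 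From $(\mathrm{id})^{*}=\mathrm{id}$ and $(g\circ f)^{*}=f^{*}\circ g^{*}$, together with compatibility of the projections, one reads off $QSH(\mathrm{id})=\mathrm{id}$ and $QSH(g\circ f)=QSH(g)\circ QSH(f)$.

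The crux is the factorization through $H(TOP)$, namely that $f\simeq g$ forces $QSH(f)=QSH(g)$. Given a homotopy $F:X\times I\to Y$ with $Fi_{0}=f$ and $Fi_{1}=g$, I would pull $V_{\cdot }$ back to $W_{\cdot }=F^{*}V_{\cdot }$ on $X\times I$; then $i_{0}^{*}W_{\cdot }=f^{*}V_{\cdot }$ and $i_{1}^{*}W_{\cdot }=g^{*}V_{\cdot }$, and the components of $QSH(f)$ and $QSH(g)$ at $V_{\cdot }$ both factor through $|\pi(W_{\cdot })|\to |\pi(V_{\cdot })|$. Thus it suffices to prove $QSH(i_{0})=QSH(i_{1}):QSH(X)\to QSH(X\times I)$, after which composing with $QSH(F)$ gives the claim. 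Using the projection $p:X\times I\to X$ with $pi_{0}=pi_{1}=\mathrm{id}_{X}$, it is enough to show $QSH(p)$ is an isomorphism, for then $QSH(p)QSH(i_{0})=\mathrm{id}=QSH(p)QSH(i_{1})$ yields $QSH(i_{0})=QSH(i_{1})$. I expect the genuine difficulty to lie exactly in this interval-collapse: one must show that pulling back along $p$ is cofinal and that the connected-component functor $\pi$ of open partitions, after realization, registers nothing of the contractible factor $I$, so that $|\pi(W_{\cdot })|$ has the same pro-homotopy type over $X\times I$ as over $X$. Everything else — the simplicial homotopy invariance used above and the cofinality bookkeeping — is comparatively routine once this is in hand.
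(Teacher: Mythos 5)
Your formal scaffolding is sound and matches what the paper also treats as routine: the object-level well-definedness via the co-filtering category $HCov(X)$, functoriality via pullback of hypercoverings (the paper's $f^{-1}(U_{\cdot })$ is exactly your $f^{*}V_{\cdot }$), and the standard reduction of homotopy invariance to $QSH(i_{0})=QSH(i_{1})$. Like you, the paper identifies homotopy invariance as ``the crucial step.'' But your proposal stops precisely there: you reduce the crux to the assertion that $QSH(p)$ is an isomorphism for the projection $p:X\times I\longrightarrow X$, and then state that you ``expect the genuine difficulty to lie exactly in this interval-collapse.'' That is a renaming of the difficulty, not an argument, so the theorem remains unproved in your write-up.

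Moreover, the reformulation you chose is harder to attack than the original statement, not easier. To prove $QSH(p)$ invertible you would either (i) exhibit $QSH(i_{0})$ as a two-sided inverse, which requires $QSH(i_{0}\circ p)=\mathrm{id}$ even though $i_{0}\circ p$ is only \emph{homotopic} to $\mathrm{id}_{X\times I}$ --- i.e.\ you would be assuming exactly the homotopy invariance you are trying to prove; or (ii) argue that $p^{*}:HCov(X)\longrightarrow HCov(X\times I)$ is cofinal, which is false: already for $X$ a point, no hypercovering pulled back from the point refines the hypercovering of $I$ generated by the open covering $\left\{ [0,2/3),(1/3,1]\right\} $, since each summand of a pulled-back $U_{0}$ is a copy of $I$, which is connected and would have to map, over $I$, into a single element of that covering. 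Any correct argument must subdivide the interval, and that is precisely what the paper does: using compactness of $[0,1]$ and the inductive, truncated-hypercovering technique of Proposition (8.11) of Artin--Mazur, it constructs, for each hypercovering $U_{\cdot }$ on $Y$, a single hypercovering $V_{\cdot }$ on $X$ refining both $f^{-1}(U_{\cdot })$ and $g^{-1}(U_{\cdot })$, on which the two induced morphisms $\Gamma (V_{\cdot },\pi )\longrightarrow \Gamma (U_{\cdot },\pi )$ are equal in $pro$-$H(TOP)$. Supplying that construction (or an equivalent subdivision argument) is what your proposal is missing.
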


\begin{remark}
The functor from $H\left( TOP\right) $ to $pro$-$H\left( TOP\right) $ will
be denoted $QSH$ as well
\end{remark}

\section{Comparison}

Let $X$ be a locally finite space (see \cite{McCord-MR0196744}, p. 466). It
means that every point has a finite neighborhood. Due to \cite%
{McCord-MR0196744}, Theorem 2, there exists a simplicial set $\mathcal{K}%
\left( X\right) $, functorially dependent on $X$, and a weak homotopy
equivalence%
\begin{equation*}
\left\vert \mathcal{K}\left( X\right) \right\vert \longrightarrow X.
\end{equation*}%
Let us consider the functor above as a functor to $pro$-$H\left( TOP\right) $%
:%
\begin{equation*}
X\longmapsto \left\vert \mathcal{K}\left( X\right) \right\vert :LF\text{-}%
TOP\longrightarrow TOP\subseteq pro\text{-}H\left( TOP\right)
\end{equation*}%
where $LF$-$TOP$ is the full subcategory of locally finite spaces.

\begin{example}
\label{4-Circle}Let $X$ be a so called $\mathbf{4}$\textbf{-point circle},
i.e. a space with four points $\left\{ a,b,c,d\right\} $ and the following
topology%
\begin{equation*}
\tau =\left\{ X,\varnothing ,\left\{ a\right\} ,\left\{ c\right\} ,\left\{
a,b,c\right\} ,\left\{ a,d,c\right\} ,\left\{ a,c\right\} \right\} .
\end{equation*}%
Then $\left\vert \mathcal{K}\left( X\right) \right\vert $ is homeomorphic to
an ordinary circle $S^{1}$.
\end{example}

\begin{theorem}
\label{Comparison-locally-finite}On the category%
\begin{equation*}
LF\text{-}TOP\subseteq TOP,
\end{equation*}%
there exists a natural isomorphism%
\begin{equation*}
\left\vert \mathcal{K}\left( X\right) \right\vert \approx QSH\left( X\right)
.
\end{equation*}
\end{theorem}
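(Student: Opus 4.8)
The plan is to peel off the two layers of pro-structure in $QSH$ one at a time, and then to match the resulting finite model with McCord's simplicial set.

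First I would dispose of the inner pro-set layer. Every open subspace of a locally finite space is locally finite, and a coproduct of locally finite spaces is locally finite; hence in any hypercovering $U_{\cdot }$ of a locally finite $X$ each term $U_{n}$ (a coproduct of finite intersections of opens) is again locally finite. A locally finite space is an Alexandrov space, in which the minimal open neighbourhood $U_{x}$ of a point is connected (any open splitting of $U_{x}$ would contradict minimality), so such spaces are locally connected. By the Proposition, $\pi \left( U_{n}\right) $ is therefore isomorphic to the genuine set $\pi _{0}^{\prime }\left( U_{n}\right) $ of connected components. Consequently $\pi \left( U_{\cdot }\right) $ is an honest simplicial set, $\left\vert \pi \left( U_{\cdot }\right) \right\vert $ is a CW complex, and $QSH\left( X\right) $ becomes a genuine pro-object of CW complexes indexed by $HCov\left( X\right) $.

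Next I would collapse the outer pro-system. Local finiteness gives a finest open covering $\mathcal{U}_{\min }=\left\{ U_{x}\right\} $ by minimal open neighbourhoods, which refines every open covering. Let $C_{\min }$ be its \v{C}ech hypercovering. I claim $C_{\min }$ is cofinal in the co-filtering category $HCov\left( X\right) $, in the concrete sense that every hypercovering $U_{\cdot }$ receives a simplicial map $C_{\min }\longrightarrow U_{\cdot }$; one builds such a map by induction on degree, using the hypercovering lifting property (the maps $U_{n+1}\to \left( Cosk_{n}U_{\cdot }\right) _{n+1}$ are coverings) together with the decisive feature of minimality: since any open set containing $x$ already contains $U_{x}$, every piece $U_{x}$ of $C_{\min }$ admits a section into the given covering. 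Cofinality then yields an isomorphism $QSH\left( X\right) \cong \left\vert \pi _{0}^{\prime }\left( C_{\min }\right) \right\vert $ in $pro$-$H\left( TOP\right) $, i.e. the pro-object is essentially constant with value a single CW complex.

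It remains to identify $\left\vert \pi _{0}^{\prime }\left( C_{\min }\right) \right\vert $ with $\left\vert \mathcal{K}\left( X\right) \right\vert $. Here the vertices of $\pi _{0}^{\prime }\left( C_{\min }\right) $ are the components of the minimal open sets and the higher simplices are components of their finite intersections; note that these intersections need not be connected (already for the $\mathbf{4}$-point circle of Example \ref{4-Circle} one has $U_{b}\cap U_{d}=\left\{ a\right\} \sqcup \left\{ c\right\} $), which is precisely the reason the partition functor $\pi $, rather than the bare \v{C}ech nerve, is used. I would compare this simplicial set with McCord's order complex $\mathcal{K}\left( X\right) $ of the specialization poset by a local, combinatorial analysis of these components, show the comparison is a weak homotopy equivalence, and then invoke the fact that both spaces are CW complexes to upgrade it, via Whitehead's theorem, to a homotopy equivalence, hence an isomorphism in $H\left( TOP\right) $. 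Finally, naturality in $X$ would be assembled from the functoriality of $QSH$ (Theorem \ref{QSH}) and of McCord's construction, checking compatibility on the cofinal minimal-covering system and absorbing the fact that continuous maps need not preserve minimal open sets into the passage to the homotopy pro-category. The hard part will be this last step: establishing that $\left\vert \pi _{0}^{\prime }\left( C_{\min }\right) \right\vert $ has the weak homotopy type of $X$ in spite of the disconnected intersections, i.e. carrying out the combinatorial identification with $\mathcal{K}\left( X\right) $, and, secondarily, verifying genuine cofinality of $C_{\min }$ in $HCov\left( X\right) $ through the degreewise lifting construction.
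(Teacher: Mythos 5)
Your first step is fine, and in fact slightly more general than what the paper does: every term of a hypercovering of a locally finite space is a coproduct of open subspaces of $X$, hence locally finite, hence locally connected, so the inner pro-set layer collapses degreewise. The genuine gap is your second step, the claim that the \v{C}ech hypercovering $C_{\min }$ of the minimal covering admits a simplicial map to every hypercovering. Your lifting argument (``any open set containing $x$ already contains the minimal neighbourhood of $x$'') is valid only for pieces of $C_{\min }$ that \emph{are} minimal neighbourhoods, i.e. for the degree-$0$ pieces. In degree $n\geq 1$ the pieces are intersections $V_{x_{0}}\cap \dots \cap V_{x_{n}}$, and such an intersection can be \emph{connected without containing a smallest point}; a covering of it by proper open subsets then admits no section, and no simplicial map out of $C_{\min }$ can exist. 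Concretely, take $X=\left\{ x,y,z_{1},z_{2},w\right\} $ with specialization preorder generated by $x\leq z_{i}$, $y\leq z_{i}$, $z_{i}\leq w$ ($i=1,2$), open sets being the up-sets, so $V_{v}$ is the up-set of $v$. Then $V_{x}\cap V_{y}=\left\{ z_{1},z_{2},w\right\} $ is connected (every nonempty open subset contains $w$) and has no minimum. Let $U_{\cdot }$ be the hypercovering with $U_{0}=\coprod_{v}V_{v}$ and with $U_{1}\longrightarrow \left( Cosk_{0}U_{\cdot }\right) _{1}$ given over the $(x,y)$- and $(y,x)$-pieces by the covering $\left\{ V_{z_{1}},V_{z_{2}}\right\} =\left\{ \left\{ z_{1},w\right\} ,\left\{ z_{2},w\right\} \right\} $ and by the identity elsewhere, extended coskeletally above degree $1$. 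Since $x$ and $y$ are minimal points, any degree-$0$ map $C_{\min ,0}\longrightarrow U_{0}$ over $X$ must send their pieces identically to themselves, and then in degree $1$ one would need a section of $\left\{ z_{1},w\right\} \sqcup \left\{ z_{2},w\right\} \longrightarrow \left\{ z_{1},z_{2},w\right\} $ over a connected space, which does not exist. So $C_{\min }$ is not even weakly initial in $HCov\left( X\right) $, and your collapse of the outer pro-system fails. (A secondary point: even where maps exist, weak initiality alone does not make a pro-object essentially constant; one needs the maps to be essentially unique, i.e. an initial object.)

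This failure is precisely what the paper's proof is engineered to avoid: instead of the \v{C}ech hypercovering it uses the hypercovering $U_{n}=\coprod_{x_{0}\leq x_{1}\leq \dots \leq x_{n}}V_{x_{n}}$ indexed by \emph{chains} of the specialization preorder, whose pieces in every degree are themselves minimal neighbourhoods (along a chain one has $V_{x_{0}}\cap \dots \cap V_{x_{n}}=V_{x_{n}}$), so the section property you invoke really does hold in all degrees and this object is initial in $HCov\left( X\right) $. That choice also dissolves your third step: since each $V_{x_{n}}$ is connected, $\pi \left( U_{n}\right) $ is literally the set of chains $x_{0}\leq \dots \leq x_{n}$, i.e. $\pi \left( U_{\cdot }\right) $ \emph{is} McCord's simplicial set $\mathcal{K}\left( X\right) $ on the nose, and no separate combinatorial comparison (your acknowledged ``hard part,'' together with the Whitehead-theorem upgrade) is needed. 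As written, your proposal therefore has two unfilled holes --- a cofinality claim that is false for $C_{\min }$, and an unproved identification of $\left\vert \pi _{0}^{\prime }\left( C_{\min }\right) \right\vert $ with $\left\vert \mathcal{K}\left( X\right) \right\vert $ --- and both are repaired simultaneously by replacing $C_{\min }$ with the chain hypercovering.
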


\begin{remark}
The shape of a locally finite (even a finite) space differs significantly
from $\left\vert \mathcal{K}\left( X\right) \right\vert $. Say, the space
from Example \ref{4-Circle} has the shape of a point.
\end{remark}

Let now $X$ be a Hausdorff paracompact space. We will simply call such
spaces \textbf{paracompact}. Remind that a $\natural $-equivalence between
pro-spaces is a mapping%
\begin{equation*}
f:\mathbf{X}\longrightarrow \mathbf{Y}
\end{equation*}%
in $pro$-$H\left( TOP\right) $ inducing an isomorphism of pro-sets%
\begin{equation*}
\pi _{0}\left( f\right) :\pi _{0}\left( \mathbf{X}\right) \longrightarrow
\pi _{0}\left( \mathbf{Y}\right) ,
\end{equation*}%
and isomorphisms of pro-groups%
\begin{equation*}
\pi _{n}\left( f\right) :\pi _{n}\left( \mathbf{X},f^{-1}\left( y\right)
\right) \longrightarrow \pi _{n}\left( \mathbf{Y},y\right) ,n\geq 1,
\end{equation*}%
for any point $y\longrightarrow \mathbf{Y}$. It is known \cite%
{Atrin-Mazur-MR883959} that the canonical morphism%
\begin{equation*}
X_{\alpha }\longrightarrow \left( Cosk_{n}X_{\alpha }\right)
\end{equation*}%
is a $\natural $-equivalence between pro-spaces%
\begin{equation*}
\mathbf{X}\longrightarrow Cosk\left( \mathbf{X}\right) .
\end{equation*}

\begin{theorem}
\label{Comparison-paracompact}Let $X$ be\ a paracompact space. Then $%
QSH\left( X\right) $ is naturally $\natural $-equivalent to the ordinary
shape $SH\left( X\right) $ of $X$.
\end{theorem}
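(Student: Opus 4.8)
The plan is to construct an explicit comparison morphism
\begin{equation*}
QSH(X)\longrightarrow SH(X)
\end{equation*}
in $pro\text{-}H(TOP)$ and to verify, using paracompactness, that it induces isomorphisms on $\pi _{0}$ as pro-sets and on all $\pi _{n}$ as pro-groups, which is exactly a $\natural $-equivalence. Two features of the paracompact Hausdorff setting will be used throughout: every open covering is normal, so that $SH(X)$ is indexed by the full directed set $Cov(X)$ of all open coverings; and subordinated shrinkings and partitions of unity exist, which is what will let the auxiliary pro-directions appearing in $QSH$ collapse. Write $C(\mathcal{U})$ for the \v{C}ech hypercovering attached to a covering $\mathcal{U}$, so that $\mathcal{U}\longmapsto C(\mathcal{U})$ is a functor $Cov(X)\longrightarrow HCov(X)$.

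I would factor the comparison as
\begin{equation*}
QSH(X)\longrightarrow \left\{ \left\vert \pi (C(\mathcal{U}))\right\vert \right\} _{\mathcal{U}\in Cov(X)}\longrightarrow \left\{ \left\vert N\mathcal{U}\right\vert \right\} _{\mathcal{U}\in Cov(X)}=SH(X).
\end{equation*}
The first arrow is the canonical projection obtained by restricting the pro-object $U_{\cdot }\longmapsto \left\vert \pi (U_{\cdot })\right\vert $ along the inclusion of the \v{C}ech hypercoverings into $HCov(X)$. The second arrow comes from the observation that in each simplicial degree $C(\mathcal{U})_{n}=\coprod (U_{i_{0}}\cap \cdots \cap U_{i_{n}})$ is a disjoint union of open sets, so the partition into summands is the coarsest open partition and exhibits the \v{C}ech nerve value $(N\mathcal{U})_{n}$ as the terminal index of the pro-set $\pi (C(\mathcal{U})_{n})$; this assignment is compatible with faces and degeneracies, giving a simplicial map $\pi (C(\mathcal{U}))\longrightarrow N\mathcal{U}$ and, after realization, the desired natural map. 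Naturality in $X$ over the paracompact spaces follows from the functoriality of $\pi $, of the \v{C}ech and nerve constructions, and of geometric realization.

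It then remains to prove that each arrow is a $\natural $-equivalence. For the second arrow the discrepancy between $\pi (C(\mathcal{U}))$ and $N\mathcal{U}$ lies entirely in the finer open (hence clopen) partitions of the possibly disconnected intersections $U_{i_{0}}\cap \cdots \cap U_{i_{n}}$. The key point I would establish is an interleaving: by paracompactness every such finer partition is dominated by the partition induced on $C(\mathcal{V})$ by a suitable refinement $\mathcal{V}\geq \mathcal{U}$, while conversely refining the covering refines the partitions. A cofinality argument between the partition indices and $Cov(X)$, carried out uniformly in the simplicial degree and compatibly with the simplicial operators, then shows that the extra pro-direction contributed by $\pi $ is redundant in the pro-category, so that $\left\{ \left\vert \pi (C(\mathcal{U}))\right\vert \right\} $ and $SH(X)$ coincide in $pro\text{-}H(TOP)$. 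For the first arrow I would show that, for paracompact $X$, the \v{C}ech hypercoverings are $\natural $-cofinal in $HCov(X)$: a general hypercovering is a refinement of a \v{C}ech one, and the cited Artin--Mazur coskeleton $\natural $-equivalence $\mathbf{X}\longrightarrow Cosk(\mathbf{X})$ reduces the claim, at each Postnikov stage, to finitely many homotopy pro-groups, which are already computed \v{C}ech-wise on a paracompact space by partition-of-unity refinement arguments.

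The step I expect to be the main obstacle is precisely this pair of cofinality statements, and above all the interleaving for the finer partitions. Concretely, one must show that the pro-set $\pi (C(\mathcal{U})_{n})$, whose deeper terms record arbitrarily fine clopen partitions of the intersections, is cofinally refined by partitions arising from refinements of $\mathcal{U}$, compatibly across all degrees and with $d_{\ast }$ and $s_{\ast }$, and that passing to $\pi _{0}$ and to $\pi _{n}$ annihilates the residual indeterminacy. Paracompactness, through normality of all coverings and the existence of subordinated shrinkings, is exactly the hypothesis that should make these cofinalities hold; assembling them into genuine isomorphisms of pro-homotopy groups, natural in $X$, is where the real work lies.
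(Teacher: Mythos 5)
Your architecture (compare through \v{C}ech hypercoverings, invoke the Artin--Mazur coskeleton $\natural $-equivalence, use paracompactness to produce refinements) is in the right spirit, but the step you yourself flag as ``where the real work lies'' is a genuine gap, and in the form you state it the claim is too strong. Your second arrow asserts that $\left\{ \left\vert \pi (C(\mathcal{U}))\right\vert \right\} _{\mathcal{U}}$ and $SH(X)$ \emph{coincide} in $pro$-$H(TOP)$ via an interleaving: every compatible system of partitions $(\mathcal{W}_{n})_{n\geq 0}$ of the spaces $C(\mathcal{U})_{n}$ should be dominated by the summand partitions of $C(\mathcal{V})$ for a single refinement $\mathcal{V}\geq \mathcal{U}$. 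Spelled out, one covering $\mathcal{V}$ must satisfy, for \emph{every} degree $n$ and every tuple, that $V_{j_{0}}\cap \cdots \cap V_{j_{n}}$ lies in one piece of $\mathcal{W}_{n}$. But the compatibility conditions on $(\mathcal{W}_{n})$ (faces force higher partitions only to \emph{refine} pullbacks of lower ones; degeneracies pin down only degenerate summands) allow the partitions in distinct non-degenerate degrees to be chosen independently and arbitrarily fine. So your interleaving is a common-refinement problem for \emph{infinitely many} coverings of $X$ at once, and paracompactness only yields common refinements of finitely many: e.g.\ the coverings of $[0,1]$ by balls of radius $1/n$, $n\in \mathbb{N}$, admit no common open refinement. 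There is no mechanism in your proposal to produce the required (even homotopy-level) factorizations across all degrees simultaneously, and this is precisely the point where one should not expect an isomorphism at all, only a $\natural $-equivalence.

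This is exactly what the paper's proof is built to circumvent, and it shows your coskeleton is in the wrong place. The paper applies the Artin--Mazur $\natural $-equivalence to $QSH(X)\longrightarrow Cosk\left( QSH(X)\right) $ itself, and then constructs a genuine equivalence $SH(X)\longrightarrow Cosk\left( QSH(X)\right) $: at an index $\left( U_{\cdot },n,\mathcal{W}_{0},\ldots ,\mathcal{W}_{n}\right) $ of the coskeleton only \emph{finitely many} partitions occur, so paracompactness gives a normal covering $\mathcal{V}$ refining $\left( d_{0}\right) ^{n}:U_{n}\longrightarrow X$ and all $\left( d_{0}\right) ^{i}\mathcal{W}_{i}$, and sending $V\in \mathcal{V}$ to the unique pieces $W_{i}$ with $V\subseteq \left( d_{0}\right) ^{i}W_{i}$ defines $N\mathcal{V}\longrightarrow Cosk_{n}\left( \Gamma \left( U_{\cdot },\pi \right) \right) $. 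Note that this map is built for \emph{arbitrary} hypercoverings $U_{\cdot }$, so the paper never needs your first arrow; and there your justification also points the wrong way: every hypercovering maps canonically \emph{to} the \v{C}ech hypercovering of its degree-zero covering (the trivial direction, since $C(\mathcal{U})=Cosk_{0}$), whereas cofinality of \v{C}ech objects in $HCov(X)$ would require a \v{C}ech hypercovering mapping \emph{to} a given hypercovering, which fails in general --- again only a coskeleton-mediated $\natural $-statement is available. A last, minor inaccuracy: the summand partition of $C(\mathcal{U})_{n}$ is not the coarsest open partition (coarser ones merge summands), so $\left( N\mathcal{U}\right) _{n}$ is not a terminal index of $\pi (C(\mathcal{U})_{n})$; the projection you want exists anyway, just not for the reason you give.
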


\section{Proofs}

\subsection{Proof of Theorem \protect\ref{QSH}}

\begin{proof}
The crucial step is the following. Given two homotopic mappings%
\begin{equation*}
f,g:X\rightrightarrows Y,
\end{equation*}%
the corresponding morphisms:%
\begin{equation*}
QSH\left( f\right) =QSH\left( g\right) :QSH\left( X\right) \longrightarrow
QSH\left( Y\right)
\end{equation*}%
are equal in the category $pro$-$H\left( TOP\right) $. This, in turn is
proved using compactness of the unit interval and the technique of
Proposition (8.11) from \cite{Atrin-Mazur-MR883959}: given a hypercovering $%
U_{\cdot }$ on $Y$, one constructs a sequence of truncated hypercoverings on 
$X$, resulting in a hypercovering $V_{\cdot }$ on $X$, which refines both $%
f^{-1}\left( U_{\cdot }\right) $ and $g^{-1}\left( U_{\cdot }\right) $, and
such that the corresponding morphisms%
\begin{equation*}
\Gamma \left( V_{\cdot },\pi \right) \longrightarrow \Gamma \left( U_{\cdot
},\pi \right)
\end{equation*}%
are equal in the category $pro$-$H\left( TOP\right) $.
\end{proof}

\subsection{Proof of Theorem \protect\ref{Comparison-locally-finite}}

\begin{proof}
Introduce the following pre-order on $X$ (see \cite{McCord-MR0196744}, p.
468):%
\begin{equation*}
x\leq y\Longleftrightarrow V_{y}\subseteq V_{x}
\end{equation*}%
where $V_{x}$ is the minimal (finite) open neighborhood of $x$. Let now $%
U_{\cdot }$ be the following hypercovering:%
\begin{equation*}
U_{n}=\dcoprod\limits_{x_{0}\leq x_{1}\leq ...\leq x_{n}}V_{x_{n}}
\end{equation*}%
with the evident face and degeneracy mappings. This hypercovering is clearly
an initial object in the category $HCov\left( X\right) $. All spaces $V_{x}$
are connected, therefore, for each $n$, $\pi \left( U_{n}\right) $ is a 
\textbf{set} (i.e. a trivial pro-set). Finally, $QSH\left( X\right) $ is a 
\textbf{space} (i.e. a trivial pro-space) $\left\vert K\left( X\right)
\right\vert $ where $K\left( X\right) $ is the following simplicial set:%
\begin{equation*}
K\left( X\right) _{n}=\left\{ x_{0}\leq x_{1}\leq ...\leq x_{n}\right\} .
\end{equation*}%
The latter simplicial set is exactly the simplicial set $\mathcal{K}\left(
X\right) $ from \cite{McCord-MR0196744}, Theorem 2. It follows that%
\begin{equation*}
QSH\left( X\right) \approx \left\vert \mathcal{K}\left( X\right) \right\vert
\end{equation*}%
(homotopy equivalent) while%
\begin{equation*}
\left\vert \mathcal{K}\left( X\right) \right\vert \overset{weak}{\approx }X
\end{equation*}%
(weak homotopy equivalent).
\end{proof}

\subsection{Proof of Theorem \protect\ref{Comparison-paracompact}}

\begin{proof}
There exists \cite{Atrin-Mazur-MR883959} a natural $\natural $-equivalence%
\begin{equation*}
QSH\left( X\right) \longrightarrow Cosk\left( QSH\left( X\right) \right) .
\end{equation*}

Let now construct a homotopy equivalence%
\begin{equation*}
Sh\left( X\right) \longrightarrow Cosk\left( QSH\left( X\right) \right) .
\end{equation*}%
Let $U_{\cdot }\in HCov\left( X\right) $, let $n\in \mathbb{N}$ and let%
\begin{equation*}
V_{\cdot }=Cosk_{n}\left( QSH\left( X\right) \right) =Cosk_{n}\left( \Gamma
\left( U_{\cdot },\pi \right) \right) .
\end{equation*}%
Consider the following open covering $\mathcal{U}$ on $X$:%
\begin{equation*}
\mathcal{U}=\left( d_{0}\right) ^{n}:U_{n}\longrightarrow X.
\end{equation*}%
Let us now consider the open partitions $\mathcal{W}_{0}$, $\mathcal{W}_{1}$%
, ... , $\mathcal{W}_{n}$, of $U_{0}$, $U_{1}$, ... , $U_{n}$, involved in
the construction of pro-sets $\pi \left( U_{0}\right) $, $\pi \left(
U_{1}\right) $, ... , $\pi \left( U_{n}\right) $. Finally, since $X$ is
paracompact, there exists a normal open covering $\mathcal{V}$ on $X$,
refining $\mathcal{U}$ and all coverings%
\begin{equation*}
\left( d_{0}\right) ^{i}\mathcal{W}_{i},i=0,1,...,n.
\end{equation*}%
Denote the correspondence%
\begin{equation*}
\left( U_{.},n,\mathcal{W}_{i}\right) \longmapsto \mathcal{V}
\end{equation*}%
by%
\begin{equation*}
\xi \left( U_{.},n,\mathcal{W}_{i}\right) =\mathcal{V}.
\end{equation*}%
Given $V\in \mathcal{V}$, there exist unique elements $W_{i}$ from $\mathcal{%
W}_{i}$ such that%
\begin{equation*}
V\subseteq \left( d_{0}\right) ^{i}W_{i}.
\end{equation*}%
This gives a well-defined mapping from the \v{C}ech nerve%
\begin{equation*}
\varphi _{\left( U_{.},n,\mathcal{W}_{i}\right) }:N\mathcal{V}%
\longrightarrow Cosk_{n}\left( \Gamma \left( U_{\cdot },\pi \right) \right) .
\end{equation*}%
Finally, the pair $\left( \xi ,\varphi \right) $ gives the desired
equivalence%
\begin{equation*}
SH\left( X\right) \longrightarrow Cosk\left( QSH\left( X\right) \right)
\end{equation*}%
in $pro$-$H\left( TOP\right) $.
\end{proof}

\bibliographystyle{amsalpha}
\bibliography{QuasiShape}

\end{document}